\newtheorem{theorem}{Theorem}
\newtheorem*{theorem*}{Theorem}
\newtheorem{proposition}[theorem]{Proposition}
\newtheorem{lemma}[theorem]{Lemma}
\theoremstyle{definition}
\newtheorem*{definition}{Definition}
\newcommand{\N}{\mathbf{N}}
\newcommand{\R}{\mathbf{R}}
\newcommand{\CC}{\mathbf{C}}
\renewcommand{\epsilon}{\varepsilon}
\renewcommand{\theta}{\vartheta}
\newcommand{\D}{D}
\newcounter{thmlistcnt}
\newenvironment{thmlist}%
	{\setcounter{thmlistcnt}{0}%
	\begin{list}{\emph{(\roman{thmlistcnt})}}{%
		\usecounter{thmlistcnt}%
		\setlength{\topsep}{0pt}%
		\setlength{\leftmargin}{0pt}%
		\setlength{\itemsep}{0pt}%
		\setlength{\labelwidth}{17pt}
		\setlength{\itemindent}{30pt}}%
	}%
	{\end{list}}%
\newcommand\mynobreakpar{\par\nobreak\@afterheading} 
\newlength{\hatchspread}
\newlength{\hatchthickness}
\newlength{\hatchshift}
\newcommand{\hatchcolor}{}
\tikzset{hatchspread/.code={\setlength{\hatchspread}{#1}},
         hatchthickness/.code={\setlength{\hatchthickness}{#1}},
         hatchshift/.code={\setlength{\hatchshift}{#1}},
         hatchcolor/.code={\renewcommand{\hatchcolor}{#1}}}
\tikzset{hatchspread=3pt,
         hatchthickness=0.4pt,
         hatchshift=0pt,
         hatchcolor=black}
\definecolor{grey}{rgb}{0.8,0.8,0.8}	
\newcommand{\youngbox}[2]{\draw  (#2,-#1) rectangle (#2-1,-#1-1);}
\newcommand{\fillbox}[3]{\draw[fill=#3] (#2,-#1) rectangle (#2-1,-#1-1);}
\newcommand{\hatchbox}[3]{\draw[pattern=custom north east lines, hatchspread=#3] (#2,-#1) rectangle (#2-1,-#1-1);}
\newcommand{\entry}[3]{\node at (#2-0.5,-#1-0.5) {#3};}
\newcommand{\entryg}[3]{\node at (#2-0.5,-#1-0.5) {#3};}
\newcommand{\entryb}[3]{\node at (#2-0.5,-#1-0.5) {\textbf{#3}};}
\newcommand{\bl}{(\lambda)}
\begin{document}
\title[]{}
\date{\today}

\let\Ssymbol\S
\renewcommand{\R}{\mathcal{R}}
\newcommand{\C}{\mathcal{C}}
\renewcommand{\H}{\mathcal{H}}
\newcommand{\Rpp}{R^{\prime +}}
\newcommand{\Cpp}{C^{\prime +}}
\newcommand{\NW}{{\mathrm{NW}}}
\newcommand{\SE}{{\mathrm{SE}}}
\newcommand{\NE}{{\mathrm{NE}}}
\newcommand{\SW}{{\mathrm{SW}}}
\newcommand{\SSYT}{\mathrm{SSYT}}

\begin{abstract}
We use the hook lengths of a partition to define
two rectangular tableaux. We prove these tableaux have equal multisets of entries,
first by elementary combinatorial arguments, and
then using Stanley's Hook Content Formula and symmetric polynomials.
\end{abstract}

\subjclass[2010]{Primary 05E05, Secondary: 05E10}

\title[A corollary of Stanley's Hook Content Formula]{A 
corollary of Stanley's Hook Content Formula}
\author{Mark Wildon}
\date{October 2018}
\maketitle

\section{Introduction}

This paper presents two proofs of an appealing 
corollary of Stanley's Hook Content Formula \cite[Theorem 7.21.2]{StanleyII}
 for the number of semistandard Young
tableaux: the first is self-contained and entirely elementary, 
while the  second uses Stanley's result and symmetric polynomials.
The author hopes the paper will be useful as an introduction to 
 this interesting 
circle of ideas.
\thispagestyle{empty}

The following definitions are standard. 
A \emph{partition} of $n \in \N_0$ is a sequence $(\lambda_1,\ldots, \lambda_k)$
of natural numbers
such that $\lambda_1 \ge \ldots \ge \lambda_k$ and $\lambda_1 + \cdots + \lambda_k = n$. The \emph{size} of $\lambda$ is $n$.
We define $\ell(\lambda) = k$ and $a(\lambda) = \lambda_1$, setting $\ell(\varnothing) = a(\varnothing) = 0$.
The \emph{Young diagram} of $\lambda$, denoted $[\lambda]$, 
is the set of \emph{boxes}
$\{ (i,j) : 1 \le i \le \ell(\lambda), 1 \le j \le \lambda_i \}$.

We fix throughout $r$, $c \in \N$.
Let $D = \{ (i,j) : 1 \le i \le r, 1 \le j \le c\}$.
We orient $D$ by compass directions, thus $(r,1)$ is the box in its south-west corner
and $(1,c)$ is the box in its north-east corner.
As a running example, 
the Young diagram of $(7,5,4,3,3,2)$, shown as a subset of $D$
when $r = 6$ and $c= 8$, is below.

\begin{center}
\begin{tikzpicture}[x=18pt, y=18pt]
\definecolor{darkgrey}{rgb}{0.65,0.65,0.65}
\foreach \j in {1,2,3,4,5,6,7}
	\fillbox{1}{\j}{darkgrey}
\foreach \j in {1,2,3,4,5}
	\fillbox{2}{\j}{darkgrey};
\foreach \j in {1,2,3,4}
	\fillbox{3}{\j}{darkgrey};
\foreach \j in {1,2,3}
	\fillbox{4}{\j}{darkgrey};
\foreach \j in {1,2,3}
	\fillbox{5}{\j}{darkgrey};
\foreach \j in {1,2}
	\fillbox{6}{\j}{darkgrey};
\hatchbox{2}{2}{3pt}
\hatchbox{2}{3}{3pt}\hatchbox{2}{4}{3pt}\hatchbox{2}{5}{3pt}
\hatchbox{3}{2}{3pt}\hatchbox{4}{2}{3pt}\hatchbox{5}{2}{3pt}\hatchbox{6}{2}{3pt}		
\hatchbox{5}{6}{6pt}
\hatchbox{4}{6}{6pt}\hatchbox{3}{6}{6pt}\hatchbox{2}{6}{6pt}
\hatchbox{5}{5}{6pt}\hatchbox{5}{4}{6pt}
\foreach \i in {1,2,3,4,5,6}
	\foreach \j in {1,2,3,4,5,6,7,8}
		\youngbox{\i}{\j};
\end{tikzpicture}
\end{center}
\noindent 
The hatched squares show 
the hooks on $(2,2)$ and $(5,6)$, as 
defined formally in the definition below.



\begin{definition}
Let $\lambda$ be a partition with $\ell(\lambda) \le r$ and $a(\lambda) \le c$.
Let $(i,j) \in D$. 
\begin{itemize}
\item[(i)] 
The \emph{hook} on $(i,j)$, denoted $H_{(i,j)}\bl$, is
\begin{align*}
&\qquad\quad \{ (i,j) \} \cup \bigl\{ (i',j) \in [\lambda] : i' > i \bigr\} \cup \bigl\{ (i, j') \in [\lambda] : j' > j \bigr\}\\
\intertext{if $(i,j) \in [\lambda]$ and}
&\qquad\quad \{ (i,j) \} \cup \bigl\{ (i', j) \in D \backslash [\lambda] : i' < i\bigr\} \cup \bigl\{ (i,j') \in D \backslash
[\lambda] : j' < j \bigr\} \end{align*}
if $(i,j) \in D\backslash [\lambda]$.
We define the
\emph{hook length} of $(i,j)$, denoted $h_{(i,j)}\bl$, to be~$|H_{(i,j)}(\lambda)|$.
\item[(ii)] The \emph{distance} of $(i,j)$, denoted $d_{(i,j)}\bl$, is the number of boxes
in any walk by steps south and west to $(r,1)$ if $(i,j) \in [\lambda]$,
or of any walk by steps north and east to $(1,c)$ if $(i,j) \in \D \backslash [\lambda]$.
\end{itemize}
\end{definition}

Our result concerns two ways to fill the boxes of $D$ with natural numbers.
Formally, these are specified by two functions from $D$ to $\N$, assigning
to each box of $D$ a corresponding \emph{entry} in $\N$.

\begin{definition}
Let $\lambda$ be a partition with $\ell(\lambda) \le r$ and $a(\lambda) \le c$.
Let $(i,j) \in D$.
The \emph{hook/distance tableau for $\lambda$} has entry in box $(i,j)$
\begin{align*} 
\begin{cases} h_{(i,j)}\bl & \text{ if $(i,j) \in [\lambda]$} \\
d_{(i,j)}\bl & \text{ if $(i,j) \in D\backslash [\lambda]$.} \end{cases}
\intertext{The \emph{distance/hook tableau for $\lambda$} has entry in box $(i,j)$}
\begin{cases} d_{(i,j)}\bl & \text{ if $(i,j) \in [\lambda]$} \\
h_{(i,j)}\bl & \text{ if $(i,j) \in D\backslash [\lambda]$.} \end{cases} 
\end{align*}
\end{definition}

\begin{theorem}\label{thm:main} 
For any partition $\lambda$ with $\ell(\lambda) \le r$ and $a(\lambda) \le c$,
the multisets of entries of the hook/distance tableau for $\lambda$ 
and the distance/hook tableau for~$\lambda$ are equal.
\end{theorem}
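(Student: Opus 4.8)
The plan is to pass from multisets to generating functions and then exploit the $180^\circ$ rotation of the rectangle. Two finite multisets of positive integers agree if and only if their generating functions (a term $q^m$ for each element $m$, counted with multiplicity) agree, so it suffices to prove an identity of polynomials in a formal variable $q$. First I would record that the distances depend only on position: a south--west walk from $(i,j)$ to $(r,1)$ visits $r-i+j$ boxes and a north--east walk from $(i,j)$ to $(1,c)$ visits $i+c-j$ boxes, so $d_{(i,j)}\bl = r-i+j$ on $[\lambda]$ and $d_{(i,j)}\bl = i+c-j$ on $\D\setminus[\lambda]$.

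Next I would introduce the complementary partition $\lambda^c$ with $\lambda^c_i = c - \lambda_{r+1-i}$ and the rotation $\rho(i,j) = (r+1-i,\,c+1-j)$. A direct check shows $\rho$ restricts to a bijection $\D\setminus[\lambda] \to [\lambda^c]$ that carries the (north--west) hook $H_{(i,j)}\bl$ of a complement box to the ordinary hook of $\rho(i,j)$ in $[\lambda^c]$, and carries the north--east distance $i+c-j$ of $(i,j)$ to the south--west distance $r-i^*+j^*$ of $\rho(i,j)=(i^*,j^*)$ inside $[\lambda^c]$, these two numbers being equal. Hence, writing
\[ \Phi_\mu(q) = \sum_{(i,j)\in[\mu]}\bigl(q^{\,h_{(i,j)}(\mu)} - q^{\,r-i+j}\bigr), \]
the difference of the two tableau generating functions is exactly $\Phi_\lambda - \Phi_{\lambda^c}$, so the theorem reduces to the single symmetry $\Phi_\lambda = \Phi_{\lambda^c}$.

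To evaluate $\Phi_\lambda$ I would pass to the beta-set $B = \{\beta_1,\dots,\beta_r\}$, where $\beta_i = \lambda_i + r - i$ (padding $\lambda$ with zeros to length $r$). The classical first-column description gives the row-$i$ hook lengths as $\{1,\dots,\beta_i\}\setminus\{\beta_i-\beta_{i'} : i'>i\}$, while the south--west distances in row $i$ are simply $\{r-i+1,\dots,\beta_i\}$, since $r-i+\lambda_i=\beta_i$. Subtracting these two generating functions row by row, the overlapping tail $q^{r-i+1}+\dots+q^{\beta_i}$ cancels, leaving
\[ \Phi_\lambda(q) = \sum_{i=1}^r\Bigl(\sum_{s=1}^{r-i} q^s\Bigr) \;-\; \sum_{1\le i<i'\le r} q^{\,\beta_i-\beta_{i'}} = C_r(q) - \sum_{\{a,b\}\subseteq B} q^{\,|a-b|}, \]
where $C_r(q)$ depends only on $r$. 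Thus $\Phi_\lambda$ is determined by the multiset of pairwise differences of $B$.

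It then remains to compare the beta-sets of $\lambda$ and $\lambda^c$. A one-line computation gives $\beta^c_{r+1-k} = (r+c-1) - \beta_k$, so $B^c = (r+c-1) - B$ is the reflection of $B$ in the interval $\{0,\dots,r+c-1\}$; since reflection preserves all pairwise absolute differences, we get $\Phi_\lambda = \Phi_{\lambda^c}$, and the theorem follows. I expect the main obstacle to be the third step: establishing (rather than merely quoting) the first-column hook-length description and verifying that the hook and distance contributions in each row interleave so that precisely the tail cancels; the rest is bookkeeping around $\rho$ and the reflection of beta-sets. As a remark, this same input supplies the promised second proof, since Stanley's Hook Content Formula yields $\prod_{(i,j)\in[\lambda]}(1-q^{\,r-i+j})/(1-q^{\,h_{(i,j)}\bl}) = q^{-n(\lambda)} s_\lambda(1,q,\dots,q^{r-1})$, and the reflection symmetry of $B$ reappears as the invariance of this principal specialization under $\lambda\mapsto\lambda^c$.
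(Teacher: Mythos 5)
Your argument is correct, and it is a genuinely different proof from either of the two given in the paper. Your opening reduction---encode each tableau as the polynomial $\sum q^{\text{entry}}$ and use the rotation $(i,j)\mapsto(r+1-i,c+1-j)$ to convert the outer hooks and outer distances of $\lambda$ into ordinary hooks and south-west distances of the complement $\lambda^c$---is shared with the paper's algebraic proof in \S 3: your $\lambda^c$ is the paper's $\lambda^\circ$ up to trailing zeros, and the identity $h_{(i',j')}(\lambda)=h_{(i,j)}(\lambda^\circ)$ is used there in exactly the same way. After that the methods diverge. The paper stays multiplicative: it compares $s_{\lambda^\circ}(q,\ldots,q^r)$ with $s_\lambda(q,\ldots,q^r)$ via King's column-complementation bijection (Proposition~\ref{prop:KingBijection}) together with Lemma~\ref{lemma:centralSym}, applies the Hook Content Formula to both sides, and finally extracts the multiset statement from a product identity using the unique-factorization Lemma~\ref{lemma:fullCyclotomic}. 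You stay additive: writing, as you do, $\Phi_\mu(q)=\sum_{(i,j)\in[\mu]}\bigl(q^{h_{(i,j)}(\mu)}-q^{\,r-i+j}\bigr)$, the theorem becomes $\Phi_\lambda=\Phi_{\lambda^c}$, which you evaluate through the beta-set $\beta_i=\lambda_i+r-i$: the polynomial $\Phi_\lambda$ equals a quantity depending only on $r$ minus the generating function of pairwise differences of $B=\{\beta_1,\ldots,\beta_r\}$, and since $B^c=(r+c-1)-B$ is a reflection of $B$, these pairwise differences coincide. The paper's other proof (\S 2) is different from both: induction on $|\lambda|$, adding one box at a time, with the multiset bookkeeping controlled by Lemma~\ref{lemma:complementaryHooks}. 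As for what each approach buys: the paper's inductive proof is entirely self-contained; its algebraic proof serves the titular purpose of exhibiting the theorem as a corollary of Stanley's formula; yours is the shortest route given one classical input---the first-column description $\{h_{(i,j)}(\lambda): 1\le j\le \lambda_i\}=\{1,\ldots,\beta_i\}\setminus\{\beta_i-\beta_{i'}: i<i'\le r\}$---and it isolates the structural reason for the theorem, namely the invariance of the pairwise-difference multiset of a beta-set under reflection. The one gap you flag, proving rather than quoting that description, is real but minor: the fact is standard (it underlies the hook length formula and abacus combinatorics) and also follows by a short induction on the size of $\lambda$; all the remaining steps in your outline, including the row-by-row cancellation, are pure polynomial bookkeeping and check out, with the strict decrease of $\beta_1>\cdots>\beta_r$ guaranteeing that the differences $\beta_i-\beta_{i'}$ are distinct positive integers lying in $\{1,\ldots,\beta_i\}$ as required.
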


In our running example, the hook/distance tableau (below left) and distance/hook tableau (below right) 
both
have, for instance, six entries of $1$, three entries of $8$, and $12$ as their unique greatest entry.

\medskip
\begin{center}
\begin{tikzpicture}[x=18pt, y=18pt]
\foreach \j in {1,2,3,4,5,6,7}
	\fillbox{1}{\j}{grey}
\foreach \j in {1,2,3,4,5}
	\fillbox{2}{\j}{grey};
\foreach \j in {1,2,3,4}
	\fillbox{3}{\j}{grey};
\foreach \j in {1,2,3}
	\fillbox{4}{\j}{grey};
\foreach \j in {1,2,3}
	\fillbox{5}{\j}{grey};
\foreach \j in {1,2}
	\fillbox{6}{\j}{grey};
\foreach \i in {1,2,3,4,5,6}
	\foreach \j in {1,2,3,4,5,6,7,8}
		\youngbox{\i}{\j};
\entry{1}{1}{12}\entry{1}{2}{11}\entry{1}{3}{9}\entry{1}{4}{6}\entry{1}{5}{4}\entry{1}{6}{2}\entry{1}{7}{1}\entry{1}{8}{1}	
\entry{2}{1}{9}\entry{2}{2}{8}\entry{2}{3}{6}\entry{2}{4}{3}\entry{2}{5}{1}\entry{2}{6}{4}\entry{2}{7}{3}\entry{2}{8}{2}	
\entry{3}{1}{7}\entry{3}{2}{6}\entry{3}{3}{4}\entry{3}{4}{1}\entry{3}{5}{6}\entry{3}{6}{5}\entry{3}{7}{4}\entry{3}{8}{3}	
\entry{4}{1}{5}\entry{4}{2}{4}\entry{4}{3}{2}\entry{4}{4}{8}\entry{4}{5}{7}\entry{4}{6}{6}\entry{4}{7}{5}\entry{4}{8}{4}	
\entry{5}{1}{4}\entry{5}{2}{3}\entry{5}{3}{1}\entry{5}{4}{9}\entry{5}{5}{8}\entry{5}{6}{7}\entry{5}{7}{6}\entry{5}{8}{5}	
\entry{6}{1}{2}\entry{6}{2}{1}\entry{6}{3}{11}\entry{6}{4}{10}\entry{6}{5}{9}\entry{6}{6}{8}\entry{6}{7}{7}\entry{6}{8}{6}	
\end{tikzpicture}\qquad
\begin{tikzpicture}[x=18pt, y=18pt]
\foreach \j in {1,2,3,4,5,6,7}
	\fillbox{1}{\j}{grey}
\foreach \j in {1,2,3,4,5}
	\fillbox{2}{\j}{grey};
\foreach \j in {1,2,3,4}
	\fillbox{3}{\j}{grey};
\foreach \j in {1,2,3}
	\fillbox{4}{\j}{grey};
\foreach \j in {1,2,3}
	\fillbox{5}{\j}{grey};
\foreach \j in {1,2}
	\fillbox{6}{\j}{grey};
\foreach \i in {1,2,3,4,5,6}
	\foreach \j in {1,2,3,4,5,6,7,8}
		\youngbox{\i}{\j};
\entry{1}{1}{6}\entry{1}{2}{7}\entry{1}{3}{8}\entry{1}{4}{9}\entry{1}{5}{10}\entry{1}{6}{11}\entry{1}{7}{12}\entry{1}{8}{1}	
\entry{2}{1}{5}\entry{2}{2}{6}\entry{2}{3}{7}\entry{2}{4}{8}\entry{2}{5}{9}\entry{2}{6}{1}\entry{2}{7}{2}\entry{2}{8}{4}	
\entry{3}{1}{4}\entry{3}{2}{5}\entry{3}{3}{6}\entry{3}{4}{7}\entry{3}{5}{1}\entry{3}{6}{3}\entry{3}{7}{4}\entry{3}{8}{6}	
\entry{4}{1}{3}\entry{4}{2}{4}\entry{4}{3}{5}\entry{4}{4}{1}\entry{4}{5}{3}\entry{4}{6}{5}\entry{4}{7}{6}\entry{4}{8}{8}	
\entry{5}{1}{2}\entry{5}{2}{3}\entry{5}{3}{4}\entry{5}{4}{2}\entry{5}{5}{4}\entry{5}{6}{6}\entry{5}{7}{7}\entry{5}{8}{9}	
\entry{6}{1}{1}\entry{6}{2}{2}\entry{6}{3}{1}\entry{6}{4}{4}\entry{6}{5}{6}\entry{6}{6}{8}\entry{6}{7}{9}\entry{6}{8}{11}	
\end{tikzpicture}
\end{center}

In \Ssymbol\ref{sec:proofComb} we give an elementary combinatorial
proof of Theorem~\ref{thm:main},
working by induction on the size of $\lambda$. Then in \Ssymbol\ref{sec:proofAlg} we put the theorem in its proper
context by giving a shorter algebraic
proof using Stanley's Hook Content Formula and a bijection due to King \cite[\S 4]{KingSU2Plethysms}.

\section{An elementary combinatorial proof of the main theorem}\label{sec:proofComb}

We work by induction on $n$, the size of $\lambda$. If $n=0$, so
$\lambda = \varnothing$,
then the hook/distance tableau has the distances $i+c-1$,\ldots, $i$ from west
to east in row $i$, while the distance/hook tableau has the hook lengths
$i, \ldots i + c - 1$ from west to east in row $i$. Therefore the multisets of entries agree.

Suppose the theorem holds for the partition $\lambda$ of $n$ where $n < rc$.
Let $(a,b) \in D \backslash [\lambda]$
be a box such that $[\lambda] \cup \{(a,b)\}$ is the Young diagram of a partition,
denoted~$\lambda'$. 
As a visual aid, we define the \emph{hook/hook tableau} to have
entry $h_{(i,j)}(\lambda)$ in box $(i,j)$ if $(i,j) \in [\lambda]$ and
entry $h_{(i,j)}(\lambda')$ in box $(i,j)$ if $(i,j) \in D \backslash [\lambda']$. No entry
is assigned to the exceptional box $(a,b)$. The hook/hook tableau in
our running example with $(a,b) = (2,6)$ is below.

\medskip
\begin{center}
\begin{tikzpicture}[x=18pt, y=18pt]
\foreach \j in {1,2,3,4,5,6,7}
	\fillbox{1}{\j}{grey}
\foreach \j in {1,2,3,4,5}
	\fillbox{2}{\j}{grey};
\foreach \j in {1,2,3,4}
	\fillbox{3}{\j}{grey};
\foreach \j in {1,2,3}
	\fillbox{4}{\j}{grey};
\foreach \j in {1,2,3}
	\fillbox{5}{\j}{grey};
\foreach \j in {1,2}
	\fillbox{6}{\j}{grey};
\foreach \i in {1,2,3,4,5,6}
	\foreach \j in {1,2,3,4,5,6,7,8}
		\youngbox{\i}{\j};
\hatchbox{2}{6}{3pt}		
\entryg{1}{1}{12}\entryg{1}{2}{11}\entryg{1}{3}{9}\entryg{1}{4}{6}\entryg{1}{5}{4}\entryb{1}{6}{2}\entryg{1}{7}{1}\entryg{1}{8}{1}	
\entryb{2}{1}{9}\entryb{2}{2}{8}\entryb{2}{3}{6}\entryb{2}{4}{3}\entryb{2}{5}{1}   \entryb{2}{7}{1}\entryb{2}{8}{3}	
\entryg{3}{1}{7}\entryg{3}{2}{6}\entryg{3}{3}{4}\entryg{3}{4}{1}\entryg{3}{5}{1}\entryb{3}{6}{2}\entryg{3}{7}{4}\entryg{3}{8}{6}	
\entryg{4}{1}{5}\entryg{4}{2}{4}\entryg{4}{3}{2}\entryg{4}{4}{1}\entryg{4}{5}{3}\entryb{4}{6}{4}\entryg{4}{7}{6}\entryg{4}{8}{8}	
\entryg{5}{1}{4}\entryg{5}{2}{3}\entryg{5}{3}{1}\entryg{5}{4}{2}\entryg{5}{5}{4}\entryb{5}{6}{5}\entryg{5}{7}{7}\entryg{5}{8}{9}	
\entryg{6}{1}{2}\entryg{6}{2}{1}\entryg{6}{3}{1}\entryg{6}{4}{4}\entryg{6}{5}{6}\entryb{6}{6}{7}\entryg{6}{7}{9}\entryg{6}{8}{11}	
\end{tikzpicture}
\end{center}

The following lemma is used below to
express the hook lengths of $\lambda'$ in terms of those of $\lambda$.
The hook/hook tableau above shows that the sets $\R$, $\R'$, $\C$ and $\C'$ in this lemma
are $\{1,3,6,8,9\}$,
$\{2,4,5,7\}$, $\{2\}$ and $\{1,3\}$.

\begin{lemma}\label{lemma:complementaryHooks}{\ }\mynobreakpar
\begin{thmlist}
\item Let $\R = \{ h_{(a,j)}(\lambda) 
: 1 \le j < b \}$ and let $\R' = \{ h_{(i,b)}(\lambda') : a < i \le r \}$.
Then $\R \cup \R' = \{1, \ldots r-a+b-1\}$ where the union is disjoint.

\item Let $\C = \{ h_{(i,b)}(\lambda) : 1 \le i < a \}$
and let $\C' = \{ h_{(a,j)}(\lambda') : b < j \le c \}$.
Then $\C \cup \C' = \{1,\ldots, c-b+a-1\}$ where the union is disjoint.

\end{thmlist}
\end{lemma}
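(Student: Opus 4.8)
The plan is to first derive closed formulas for the four families of hook lengths appearing in the lemma, and then to recognise the resulting integer sets as complementary sets of beta-numbers. Throughout I would use that $(a,b)$ is an addable box of $\lambda$, so that $\lambda_a = b-1$ and (if $a>1$) $\lambda_{a-1}\ge b$; in particular $\lambda_i \le b-1$ for every $i>a$. A direct count of arms and legs then gives, for $1 \le j < b$,
\[ h_{(a,j)}(\lambda) = (b-j) + \#\{\, i>a : \lambda_i \ge j\,\}, \]
since $(a,j)\in[\lambda]$ has arm $\{(a,j+1),\dots,(a,b-1)\}$; and, for $a<i\le r$,
\[ h_{(i,b)}(\lambda') = (b-\lambda_i) + (i-a) - 1, \]
since $(i,b)\in D\setminus[\lambda']$ has $b-1-\lambda_i$ cells to its left and $i-1-a$ cells above it lying outside $[\lambda']$.

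To make the structure visible I would set $m=r-a$ and $N=b-1$, and let $\mu=(\lambda_{a+1},\dots,\lambda_r)$, a partition inside the $m\times N$ box; its conjugate satisfies $\mu'_j=\#\{\,i>a:\lambda_i\ge j\,\}$. Writing $v_j=h_{(a,j)}(\lambda)$ and $w_k=h_{(a+k,b)}(\lambda')$, the formulas above become $v_j=(N+1-j)+\mu'_j$ for $1\le j\le N$ and $w_k=N+k-\mu_k$ for $1\le k\le m$, so that $\R=\{v_j\}$ and $\R'=\{w_k\}$. Reflecting through $m+N$ turns these into $\beta_k:=(m+N)-w_k=\mu_k+m-k$ and $\gamma_j:=(m+N)-v_j=(j-1)+(m-\mu'_j)$, and the assertion that $\R\cup\R'=\{1,\dots,m+N\}$ disjointly becomes equivalent to
\[ \{\beta_k : 1\le k\le m\} \,\sqcup\, \{\gamma_j : 1\le j\le N\} = \{0,1,\dots,m+N-1\}. \]
This is the classical fact that the row beta-numbers and column (conjugate) beta-numbers of a partition in an $m\times N$ box partition $\{0,\dots,m+N-1\}$.

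I would prove this last statement self-containedly, and this is the crux. All $\beta_k$ and $\gamma_j$ lie in $\{0,\dots,m+N-1\}$; the $\beta_k$ are strictly decreasing in $k$ and the $\gamma_j$ strictly increasing in $j$, so each family consists of distinct integers. Since $|\{\beta_k\}|+|\{\gamma_j\}|=m+N$ equals the size of the target set, it suffices to prove the two families disjoint. If $\beta_k=\gamma_j$ then $\mu_k+\mu'_j=k+j-1$; but this is impossible, because the cell $(k,j)$ either lies in $[\mu]$, forcing $\mu_k\ge j$ and $\mu'_j\ge k$ and hence $\mu_k+\mu'_j\ge k+j$, or lies outside $[\mu]$, forcing $\mu_k\le j-1$ and $\mu'_j\le k-1$ and hence $\mu_k+\mu'_j\le k+j-2$. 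This contradiction gives disjointness, and hence (i).

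Finally, part (ii) would follow from (i) by conjugation: transposing the whole configuration — replacing $D$ by the $c\times r$ rectangle, $\lambda$ by its conjugate, and each cell $(i,j)$ by $(j,i)$ — exchanges $r$ with $c$ and $a$ with $b$, and carries the sets $\C,\C'$ of (ii) exactly onto the sets $\R,\R'$ of (i) for the conjugate partition. The one thing to check is that hook lengths are invariant under this transposition for cells both inside and outside the diagram, which is immediate since conjugation exchanges the arm with the leg in each of the two cases of the definition. The main obstacle in the argument is the disjointness step, that is, recognising the two hook-length sets as complementary beta-numbers; once the closed formulas are in hand, this is the only place where the combinatorics of partitions genuinely enters.
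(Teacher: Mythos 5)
Your proposal is correct: the closed formulas $h_{(a,j)}(\lambda)=(b-j)+\#\{i>a:\lambda_i\ge j\}$ and $h_{(i,b)}(\lambda')=(b-\lambda_i)+(i-a)-1$ both check out against the paper's definition of inside and outside hooks, the reflection $x\mapsto (m+N)-x$ does convert the claim into the complementarity of the beta-numbers $\mu_k+m-k$ and $(j-1)+(m-\mu'_j)$, and your dichotomy on whether $(k,j)$ lies in $[\mu]$ gives a genuine proof of disjointness; conjugation does reduce (ii) to (i) since the hook definition (in both the inside and outside cases) is transpose-symmetric. The route, however, differs from the paper's in how the crucial disjointness of $\R$ and $\R'$ is established: the paper declares it ``clear from the diagram'' (a geometric picture of how an inside hook on $(a,j)$ and an outside hook on $(i,b)$ trace complementary segments of the boundary), then finishes exactly as you do, by a counting argument --- it computes the maximum element of $\R\cup\R'$ (splitting into the cases $\ell(\lambda)=r$ and $\ell(\lambda)<r$, which give $h_{(a,1)}=(r-a)+(b-1)\in\R$ and $h_{(r,b)}=b+(r-a-1)\in\R'$ respectively) and notes $|\R|+|\R'|=(b-1)+(r-a)$, with part (ii) dismissed as ``entirely analogous.'' So both proofs share the skeleton distinct $+$ disjoint $+$ bounded $+$ count, but where the paper appeals to a picture, you substitute the classical beta-number (Maya diagram) complementarity with a fully rigorous arithmetic argument. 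What your version buys is self-containedness and a link to standard theory --- the statement becomes an instance of the fact that row beta-numbers and conjugate column beta-numbers of a partition in an $m\times N$ box partition $\{0,\dots,m+N-1\}$ --- and it also makes explicit the within-family distinctness (via monotonicity of $\beta_k$ and $\gamma_j$) that the paper uses only implicitly; what it costs is more notation and the loss of the one-glance geometric intuition the paper's diagram provides.
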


\begin{proof} 
It is clear from the diagram below and the hook/hook tableau
that no hook length in $\R$ can equal a hook length in $\R'$.

\begin{center}
\begin{tikzpicture}[x=0.5cm,y=0.5cm]

\renewcommand{\a}{6.5}
\renewcommand{\b}{-5.5}
\newcommand{\as}{2.5}
\newcommand{\ass}{1}
\newcommand{\bs}{2.5}
\newcommand{\bss}{1}

\draw[line width = 0.75pt] (8,-2)--(10,-2);
\draw[line width = 0.75pt] (8,-2)--(8,-4);
\node at (7.25,-4.5) {$\iddots$};
\draw[pattern=custom north east lines, hatchspread=3pt] (8,-2) rectangle (9,-3);
\draw[line width = 0.75pt] (\a,\b)--(\a-\ass,\b)--(\a-\ass,\b-\bs)--(\a-\ass-\as,\b-\bs)--(\a-\ass-\as,\b-\bs-\bs)--(\a-\ass-\as-\ass,\b-\bs-\bs); 
\draw[very thick,->] (4.75,-2.5)--(8,-2.5);
\draw[very thick,->] (4.75,-2.5)--(4.75,\b-\bs);
\draw[thick,->] (8.3,-7)--(8.3,-3);
\draw[thick,->] (8.3,-7)--(\a-\ass,-7);
\draw[thick,->] (8.7,-9.25)--(8.7,-3);
\draw[thick,->] (8.7,-9.25)--(\a-\ass-\as,-9.25);
\node at (9.7,-2.5) {$\scriptstyle (a,b)$};
\node at (4,-2.5) {$\scriptstyle (a,j)$};
\node at (9.7,-6.75) {$\scriptstyle (i,b)$};
\node at (9.7,-9) {$\scriptstyle (i',b)$};

\end{tikzpicture}
\end{center}

\noindent Therefore $\R$ and $\R'$ are disjoint.
If $\ell(\lambda) = r$ then the greatest hook length in $\R \cup \R'$ is
$h_{(a,1)} = (r-a)+(b-1) \in \R$, measured by walking north from $(r,1)$ to $(a,1)$ then east to $(a,b-1)$.
Otherwise it is $h_{(r,b)} = b + (r-a-1) \in \R'$, measured by walking
east from $(r,1)$ to $(r,b)$ then north to $(a+1,b)$. Since $|\R| + |\R'| = (b-1) + (r-a)$,
this proves (i); the proof of (ii) is entirely analogous.
\end{proof}

Given a multiset $\mathcal{X}$ of natural numbers, let $\mathcal{X}^+ = \{x+1 : x \in \mathcal{X} \}$. 
Let~$\cup$ denote the union of multisets. Thus $\{2,2,3\}^+ = \{3,3,4\}$ and
 $\{1,2\} \cup \{2,2,3\} = \{1,2,2,2,3\}$.

We are now ready for the inductive step. Define
\begin{alignat*}{3}
\H_\NW &=  \{ h_{(i,j)}(\lambda) : (i,j)  \in [\lambda] \}
&\hspace{0.1in}
\H'_\NW &= \{ h_{(i,j)}(\lambda') : (i,j) \in [\lambda'] \} \\
\H_\SE &= \{ h_{(i,j)}(\lambda) : (i,j) \in D \backslash [\lambda] \} 
&\H'_\SE &= \{ h_{(i,j)}(\lambda') : (i,j) \in D \backslash [\lambda'] \}
\end{alignat*}
and let $\mathcal{D}_\NW, \mathcal{D}_\SE, \mathcal{D}'_\NW, \mathcal{D}'_\SE$ be
defined analogously, replacing hook lengths with distances. 


If $(i,j) \in [\lambda']$ then
\[ h_{(i,j)}(\lambda') = \begin{cases} h_{(i,j)}(\lambda) & \text{if $i\not= a$ and $j\not= b$} \\
h_{(i,j)}(\lambda)+1 & \text{if $i = a$ or $j = b$ but not both} \\
1 & \text{if $i = a$ and $j=b$}\end{cases}\]
and similarly if $(i,j) \in D\backslash [\lambda']$ then
\[ h_{(i,j)}(\lambda') = \begin{cases} h_{(i,j)}(\lambda) & \text{if $i\not= a$ and $j\not= b$} \\
h_{(i,j)}(\lambda)-1 & \text{if $i = a$ or $j = b$ but not both.} \end{cases}. \]
 The equations for $h_{(i,j)}(\lambda')$ above
show that $\H'_\NW$ is obtained from $\H_\NW$ by 
removing each hook length in $\R \cup \C$ and inserting
each hook length in $\R^+ \cup \C^+ \cup \{1\}$. 
Similarly $\H'_\SE$ is obtained from $\H_\SE$ by
removing each hook length in $\Rpp \cup \Cpp \cup \{1\}$ and
inserting each hook length in $\R' \cup \mathcal \C'$. Therefore, using the multiset union,
\begin{align*}
 \H'_\NW \cup \R \cup \C &= \H_\NW \cup \R^+
\cup \C^+ \cup \{1\} \\
\H'_\SE \cup \Rpp \cup \Cpp \cup \{1\} &= \H_\SE \cup \R'\cup \C'. \end{align*}

We now manipulate these equations so that the 
inductive hypothesis $\H_\NW \cup \mathcal{D}_\SE = \mathcal{D}_\NW \cup \H_\SE$
applies.
Recall from Lemma~\ref{lemma:complementaryHooks} 
that $\R \cup \R' = \{1,\ldots, r-a+b-1\}$ and
$\C \cup \C' = \{1,\ldots, c-b+a-1\}$. Hence, taking
the multiset union of both sides of the two equations above with $\R' \cup \C'$ and
$\R^+ \cup \C^+$, respectively, we get
%
\begin{align*}
\H'_\NW \cup \{1,\ldots,r-a+b-1\} 
\cup \{1,\ldots, c-b+a-1\} &=
\H_\NW \cup \mathcal{Y} \cup \{1\}\\
\H'_\SE \cup \{2,\ldots, r-a+b\} \cup \{2,\ldots, c-b+a\} \cup \{1\} &=
\H_\SE \cup \mathcal{Y} 
\end{align*}
where $Y =  \R^+ \cup \C^+ \cup \R' \cup \C'$.
Setting $\mathcal{Z} = \{1,\ldots, r-a+b-1\} \cup \{2,\ldots, c-b+a\}$, it follows that
\begin{align}
\H'_\NW \cup \mathcal{Z} \cup \{1\} &=
\H_\NW \cup \mathcal{Y} \cup \{1, c-b+a\} \label{eq:hook1}\\
\H'_\SE \cup \mathcal{Z} \cup \{r-a+b\} &= \label{eq:hook2}
\H_\SE \cup \mathcal{Y}. 
\end{align}
Since  $d_{(a,b)}(\lambda') = r-a+b$ and $d_{(a,b)}(\lambda) = c-b+a$
we have $\mathcal{D}'_\SE \cup \{c-b+a\} = \mathcal{D}_\SE$ and 
$\mathcal{D}'_\NW = \mathcal{D}_\NW \cup \{r-a+b\}$. Taking
the multiset union of both sides of~\eqref{eq:hook1} with
 $\mathcal{D}'_\SE \cup \{c-b+a\}$ 
and~\eqref{eq:hook2} with $\mathcal{D}'_\NW$, we get
\begin{align*}
\H'_\NW \cup \mathcal{D}'_\SE \cup \mathcal{Z} \cup \{1,c-b+a\} &=
\H_\NW \cup \mathcal{D}_\SE \cup \mathcal{Y} \cup \{1, c-b+a\} \\
\H'_\SE \cup \mathcal{D}'_\NW \cup \mathcal{Z} \cup \{r-a+b\} &=
\H_\SE \cup \mathcal{D}_\NW \cup \mathcal{Y} \cup \{r-a+b\}.
\end{align*}
Cancelling the elements of $\{1,c-b+a\}$ and $\{r-a+b\}$, we get
$\H'_\NW \cup \mathcal{D}'_\SE \cup \mathcal{Z} = \H_\NW \cup \mathcal{D}_\SE
\cup \mathcal{Y}$ and 
$\H'_\SE \cup \mathcal{D}'_\NW \cup \mathcal{Z} = \H_\SE \cup \mathcal{D}_\NW 
\cup \mathcal{Y}$.
By our inductive hypothesis, the right-hand sides are equal.
Therefore 
$\H'_\NW \cup \mathcal{D}'_\SE  = \mathcal{D}'_\NW \cup \H'_\SE$, as required.

\section{A symmetric polynomials proof of the main theorem}\label{sec:proofAlg}

\subsection{Background}\label{subsec:symBackground}
Fix a partition $\lambda$. 
A \emph{$\lambda$-tableau}
 is a function $[\lambda] \rightarrow \N$ assigning
to each box of $[\lambda]$ an entry in $\N$. A $\lambda$-tableau $t$ is \emph{semistandard}
if its rows are weakly increasing, when read from west to east, and its columns are strictly
increasing, when read from north to south.
Let $\SSYT_r(\lambda)$ denote the set of semistandard $\lambda$-tableaux 
with maximum entry at most our fixed number~$r$.
For $t \in \SSYT_r(\lambda)$, let $x^t$ denote the monomial $x_1^{e_1} \ldots x_r^{e_r}$ 
where $e_k$ is the number of entries of $t$ equal to~$k$. By definition,
the \emph{Schur polynomial}~$s_\lambda$ in~$r$ variables is
\[ s_\lambda(x_1,\ldots,x_r) = \sum_{t \in \SSYT_r(\lambda)} x^t. \]
A fundamental result states  that $s_\lambda(x_1,\ldots,x_r)$
is symmetric under permutation of $x_1, \ldots, x_r$. This has an elegant
proof by the Bender--Knuth involution: see for instance Theorem 7.10.2 in \cite{StanleyII}.

Let $|t|$ denote the sum of the entries of $t \in \SSYT_r(\lambda)$.
Specializing $s_\lambda$ by $x_k \mapsto q^k$ we
obtain 
\begin{equation}\label{eq:qSpec}
 s_\lambda(q,\ldots, q^r) = \sum_{t \in \SSYT_r(\lambda)} q^{|t|}. \end{equation}
The minimum possible value of $|t|$ for $t \in \SSYT_r(\lambda)$ 
is $B(\lambda) = \sum_{i=1}^{\ell(\lambda)}i \lambda_i$.
Stanley's Hook Content Formula may be stated as
\begin{equation}\label{eq:HCF}
 s_\lambda(q,\ldots, q^r) = q^{B(\lambda)} \!\! \prod_{(i,j) \in [\lambda]}
\frac{q^{r+j-i}-1}{q^{h_{(i,j)}(\lambda) }-1} \end{equation}
where the hook lengths $h_{(i,j)}(\lambda)$ for $(i,j) \in [\lambda]$ are as we have defined.
(The term `content' refers to the powers of $q$ in the numerators.) Stanley's
formula was first proved in \cite[Theorem 15.3]{StanleyTheoryPlanePartitionsII}; 
for the statement above see \cite[Theorem 7.21.2]{StanleyII} and the following discussion.
For example, 
\[ s_{(3,2,1)}(q,q^2,q^3) = q^{10} + 2q^{11} + 2q^{12} + 2q^{13} + q^{14} \]
enumerates the semistandard tableaux
\[ \young(111,22,3)\, \ \young(111,23,3)\, \ \young(112,22,3)\, \ \young(112,23,3)\, \
\young(113,22,3)\, \ \young(122,23,3)\, \ \young(113,23,3)\, \ \young(123,23,3). \]
The central symmetry about $x^{12}$ in the coefficients in this example is a
special case of the following basic and well-known lemma, left to the reader in
Exercise~7.75 in \cite{StanleyII}.

\begin{lemma}\label{lemma:centralSym}
Let $\lambda$ be a partition of $n$. Then
\[ s_\lambda(q,q^2, \ldots, q^r) = q^{(r+1)n} s_\lambda(q^{-1},q^{-2},\ldots, q^{-r}). \]
\end{lemma}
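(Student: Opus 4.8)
The plan is to reduce everything to the symmetry of $s_\lambda$ under permutation of its variables, which was recalled just above the statement. Since $s_\lambda(x_1,\ldots,x_r)$ is symmetric and homogeneous of degree $n$, I would first write its monomial expansion
\[ s_\lambda(x_1,\ldots,x_r) = \sum_\alpha c_\alpha\, x_1^{\alpha_1}\cdots x_r^{\alpha_r}, \]
where $\alpha = (\alpha_1,\ldots,\alpha_r)$ runs over the weak compositions of $n$ into $r$ parts and $c_\alpha \in \N_0$ is the number of $t \in \SSYT_r(\lambda)$ of content $\alpha$. The only feature of $c_\alpha$ I need is the consequence of symmetry that $c_\alpha = c_{\bar\alpha}$, where $\bar\alpha = (\alpha_r, \alpha_{r-1}, \ldots, \alpha_1)$ is the reversal of $\alpha$.

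Next I would specialize $x_k \mapsto q^k$. Using \eqref{eq:qSpec}, or directly from the monomial expansion, this yields
\[ s_\lambda(q,\ldots,q^r) = \sum_\alpha c_\alpha\, q^{\alpha_1 + 2\alpha_2 + \cdots + r\alpha_r}. \]
The heart of the argument is the elementary identity
\[ \sum_{k=1}^r k\,\bar\alpha_k = \sum_{k=1}^r k\,\alpha_{r+1-k} = \sum_{j=1}^r (r+1-j)\,\alpha_j = (r+1)n - \sum_{j=1}^r j\,\alpha_j, \]
valid precisely because $\alpha_1 + \cdots + \alpha_r = n$. Thus reversing a composition sends its weighted sum $\sum_k k\alpha_k$ to the reflected value $(r+1)n - \sum_k k\alpha_k$.

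Finally I would reindex the sum by $\beta = \bar\alpha$ and use $c_\alpha = c_\beta$ to rewrite each coefficient, obtaining
\[ s_\lambda(q,\ldots,q^r) = \sum_\beta c_\beta\, q^{(r+1)n - \sum_k k\beta_k} = q^{(r+1)n} \sum_\beta c_\beta\, q^{-\sum_k k\beta_k} = q^{(r+1)n}\, s_\lambda(q^{-1},\ldots,q^{-r}), \]
which is the claim. I do not anticipate a genuine obstacle here beyond bookkeeping; the one point needing care is that the exponent identity relies on $|\alpha| = n$, so I would emphasise that the expansion ranges only over compositions of $n$ (automatic, since $s_\lambda$ is homogeneous of degree $n$). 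An alternative route would be to exhibit a shape-preserving involution on $\SSYT_r(\lambda)$ carrying a tableau of content $\alpha$ to one of content $\bar\alpha$, but since the symmetry of $s_\lambda$ is already available this coefficient-level argument is shorter and sidesteps the subtleties of complementing a non-rectangular tableau.
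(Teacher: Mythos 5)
Your proposal is correct and is essentially the paper's own proof: both arguments rest on the symmetry of $s_\lambda$ under the reversal permutation $x_k \mapsto x_{r+1-k}$, combined with the observation that for any exponent vector with $\alpha_1 + \cdots + \alpha_r = n$ the weighted sums $\sum_k k\alpha_k$ and $\sum_k k\bar\alpha_k$ add up to $(r+1)n$. The paper phrases the conclusion as palindromicity of the coefficients of $s_\lambda(q,\ldots,q^r)$ about degree $(r+1)n/2$, whereas you reindex the sum directly, but this is only a presentational difference.
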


\begin{proof}
Let $f(x_1,x_2,\ldots, x_r)$ be a symmetric polynomial. If
$x_1^{e_1} \ldots  x_r^{e_r}$ is a monomial in $f$ then so is $x_1^{e_r} \ldots x_r^{e_1}$,
and the coefficients agree. Under the specialization $x_k \mapsto q^k$ the first
becomes $q^{e_1 + \cdots + re_r}$ and the second $q^{e_r + \cdots + re_1}$. Observe
that the sum of exponents is $(r+1)(e_1 + \cdots + e_r) = (r+1)n$. Therefore
the coefficients of $q^d$ and $q^{(r+1)n- d}$ in $f(q,q^2,\ldots,q^r)$ agree
for each $d$. Taking $f = s_\lambda$
this easily implies the lemma.
\end{proof}

The end of  our proof requires the following unique factorization theorem, implicitly
used in (4.8) in \cite{KingSU2Plethysms}.

\begin{lemma}\label{lemma:fullCyclotomic}
Let $\mathcal{E}$ and $\mathcal{E}'$ be finite multisubsets of $\N$. In the ring $\CC[q]$,
$\prod_{e \in \mathcal{E}} (q^e - 1) = \prod_{e' \in \mathcal{E}'} (q^{e'} - 1)$ if and only if $\mathcal{E} = \mathcal{E}'$.
\end{lemma}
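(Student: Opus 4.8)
The plan is to prove Lemma~\ref{lemma:fullCyclotomic} by using unique factorization into cyclotomic polynomials in $\CC[q]$. Recall that for each $e \in \N$ we have the factorization $q^e - 1 = \prod_{d \mid e} \Phi_d(q)$, where $\Phi_d$ is the $d$-th cyclotomic polynomial, and that the $\Phi_d$ are distinct irreducible factors in $\CC[q]$ (indeed each $\Phi_d(q) = \prod_{\zeta} (q - \zeta)$ runs over the primitive $d$-th roots of unity, and distinct $d$ give disjoint sets of roots). The reverse implication of the lemma is trivial, so only the forward implication needs work.

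First I would translate the polynomial identity $\prod_{e \in \mathcal{E}}(q^e-1) = \prod_{e' \in \mathcal{E}'}(q^{e'}-1)$ into a statement about the multiplicity of each $\Phi_d$ on both sides. Since $\CC[q]$ is a unique factorization domain, equality of the two products forces, for every $d \in \N$, the total exponent of $\Phi_d$ on the left to equal that on the right. The exponent of $\Phi_d$ contributed by a single factor $q^e - 1$ is $1$ if $d \mid e$ and $0$ otherwise, so summing over the multiset gives the condition
\begin{equation*}
\#\{ e \in \mathcal{E} : d \mid e \} = \#\{ e' \in \mathcal{E}' : d \mid e' \} \qquad \text{for all } d \in \N,
\end{equation*}
where these counts are taken with multiplicity. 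Writing $m_e$ and $m'_e$ for the multiplicities of $e$ in $\mathcal{E}$ and $\mathcal{E}'$, this reads $\sum_{d \mid e} m_e = \sum_{d \mid e} m'_e$ for each $d$; the main step is then to recover $m_e = m'_e$ for every $e$ from these divisibility-weighted sums.

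The cleanest way to conclude is by descending induction on $e$, exploiting that the multisets are finite. Let $N$ be an integer exceeding every element of $\mathcal{E} \cup \mathcal{E}'$. For the largest candidate value $d = N$ (or any $d$ larger than all elements), both counts are vacuously zero, and more usefully, taking $d$ equal to a given value $e$ that is maximal among those occurring, the only multiples of $e$ present are $e$ itself, giving $m_e = m'_e$ immediately. Proceeding downward: for a general $e$, the count condition with $d = e$ gives $\sum_{k} m_{ke} = \sum_{k} m'_{ke}$, summed over positive multiples $ke \le N$; all terms with $k \ge 2$ have already been matched by the induction hypothesis, so they cancel and leave $m_e = m'_e$. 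This establishes $\mathcal{E} = \mathcal{E}'$.

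I expect the main obstacle to be purely expository rather than mathematical: the inductive bookkeeping over multiples must be set up so that the descending induction is manifestly well-founded, which is why fixing the finite bound $N$ at the outset is helpful. One should also take care to phrase the cyclotomic factorization over $\CC$ (rather than $\Q$) correctly --- over $\CC$ the relevant irreducible factors are the linear factors $q - \zeta$, and it is cleanest to note that the roots of $q^e - 1$ are exactly the $e$-th roots of unity, each simple, so that matching multiplicities of a primitive $d$-th root of unity as a root of the two products yields the displayed count condition directly, bypassing any appeal to the irreducibility of $\Phi_d$ over $\Q$.
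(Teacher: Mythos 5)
Your proof is correct, but it is organized differently from the paper's. Both arguments rest on the same key fact --- a primitive $d$-th root of unity is a root of $q^e-1$ exactly when $d \mid e$ --- but the paper uses it only once: it takes $u$ maximal such that $\mathrm{e}^{2\pi \mathrm{i}/u}$ is a root of the left-hand product, deduces $u = \max \mathcal{E} = \max \mathcal{E}'$, cancels the factor $q^u-1$ from both sides (legitimate since $\CC[q]$ is a domain), and inducts on the size of the multiset. You instead extract the full system of equations: since every root of each $q^e-1$ is simple, equating the multiplicity of each primitive $d$-th root of unity on the two sides gives $\#\{e \in \mathcal{E} : d \mid e\} = \#\{e' \in \mathcal{E}' : d \mid e'\}$ for all $d \in \N$, and you then invert this system by descending induction on $e$, which is essentially M\"obius inversion over the divisor order. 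Your route proves something slightly stronger and more explicit --- the divisor-count function of a finite multiset determines the multiset --- at the cost of more bookkeeping; the paper's route is leaner, never needs multiplicities (only ``is a root''), and its induction removes one polynomial factor at a time. Two small points to tidy: your opening claim that the $\Phi_d$ are irreducible in $\CC[q]$ is false for $d \ge 3$ (they are irreducible over $\Q$), though your final paragraph correctly sidesteps this by working directly with linear factors and root multiplicities; and the displayed condition $\sum_{d \mid e} m_e = \sum_{d \mid e} m'_e$ is typeset as a sum over divisors of a fixed $e$, whereas you mean a sum over the multiples $e$ of the fixed $d$, so the index set should be written out (for instance as $\sum_{k \ge 1} m_{kd} = \sum_{k \ge 1} m'_{kd}$).
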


\begin{proof}
If either $\mathcal{E}$ or $\mathcal{E}'$ is empty the result is obvious. In the remaining cases, 
let $u$ be maximal such that $\prod_{e \in \mathcal{E}} (q^e - 1)$ has $\mathrm{e}^{2\pi \mathrm{i}/u}$
as a root. By maximality, $q^u-1$ is a factor in the left-hand side. Since $\mathrm{e}^{2\pi \mathrm{i}/u}$
is then also a root of $\prod_{e' \in \mathcal{E}'} (q^{e'} - 1)$ the same argument shows that $q^u-1$ is a factor in the right-hand side.
Therefore  $u = \max \mathcal{E} = \max \mathcal{E}'$.
It follows inductively by cancelling $q^u-1$ from both sides that $\mathcal{E} = \mathcal{E}'$.
\end{proof}

Let $\lambda^\circ$ denote the partition defined by deleting any final zeroes from
$(c-\lambda_{r}, \ldots,c-\lambda_1)$; here if $i > \ell(\lambda)$ we take $\lambda_i = 0$.

We require the  following bijection which is indicated in
\cite[\S 4]{KingSU2Plethysms}; we give a complete proof.

\begin{proposition}\label{prop:KingBijection}
There is a  bijection
\[ \SSYT_r(\lambda) \rightarrow \SSYT_r(\lambda^\circ) \]
defined by sending $t \in \SSYT_r(\lambda)$ to the unique
$\lambda^\circ$-tableau $t^\circ$ having
as its entries in column $j$ the complement in $\{1,\ldots, r\}$ of the entries of $t$ in column 
$c+1-j$, arranged in increasing order from north to south.
\end{proposition}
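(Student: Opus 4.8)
The plan is to verify directly that the prescription defining $t^\circ$ produces a genuine semistandard tableau of shape $\lambda^\circ$, and then to obtain the bijection by showing that the construction becomes an involution once the roles of $\lambda$ and $\lambda^\circ$ are interchanged.

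First I would record the combinatorial shape of $\lambda^\circ$. Writing $m_k$ for the number of boxes in column $k$ of $[\lambda]$, so that $m_k = |\{i : \lambda_i \ge k\}|$, a short index computation shows that column $j$ of $[\lambda^\circ]$ has exactly $r - m_{c+1-j}$ boxes. On the other hand, the entries of $t$ in column $c+1-j$ form a set $A_{c+1-j} \subseteq \{1,\ldots,r\}$ of size $m_{c+1-j}$, distinct because the columns of $t$ are strictly increasing, so its complement has size $r - m_{c+1-j}$. Thus the prescribed column lengths match those of $[\lambda^\circ]$, and since a Young diagram is determined by its weakly decreasing sequence of column lengths, $t^\circ$ is a well-defined filling of $[\lambda^\circ]$ by entries from $\{1,\ldots,r\}$. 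Its columns are strictly increasing by construction, so the only substantive point is that its rows are weakly increasing.

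This row condition is the heart of the argument, and I expect it to be the main obstacle. It reduces to comparing two adjacent columns $j$ and $j+1$ of $t^\circ$, which are built from the complements of the adjacent columns $p+1 := c+1-j$ and $p := c-j$ of $t$. Writing $\bar{S}$ for the complement of $S$ in $\{1,\ldots,r\}$ and $(S)_i$ for the $i$-th smallest element of $S$, the entries of $t^\circ$ in row $i$ of columns $j$ and $j+1$ are $(\bar{A}_{p+1})_i$ and $(\bar{A}_{p})_i$, so I must show $(\bar{A}_{p+1})_i \le (\bar{A}_{p})_i$. The key lemma is that
\[ |A_{p+1} \cap \{1,\ldots,v\}| \le |A_{p} \cap \{1,\ldots,v\}| \quad\text{for every } v, \]
which follows from semistandardness of $t$: if column $p$ has entries $b_1 < \cdots < b_s$ and column $p+1$ has entries $b'_1 < \cdots < b'_{s'}$, then $s \ge s'$ and $b_k \le b'_k$ for $k \le s'$, so every $b'_k \le v$ forces $b_k \le v$, giving distinct elements of $A_p \cap \{1,\ldots,v\}$. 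Translating through the identity $|\bar{S} \cap \{1,\ldots,v\}| = v - |S \cap \{1,\ldots,v\}|$ together with the characterization $(\bar{S})_i \le v \iff |\bar{S} \cap \{1,\ldots,v\}| \ge i$ then yields the desired inequality, establishing that $t^\circ$ is semistandard.

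Finally I would prove bijectivity by showing the map is an involution. Since $(\lambda^\circ)^\circ = \lambda$, which I would verify at the level of parts since rotating the complement in the $r \times c$ box twice is the identity, the same prescription defines a map $\SSYT_r(\lambda^\circ) \rightarrow \SSYT_r(\lambda)$. Applying it to $t^\circ$, column $j$ of $(t^\circ)^\circ$ is the complement of column $c+1-j$ of $t^\circ$, which is itself the complement of column $c+1-(c+1-j) = j$ of $t$; as double complementation is the identity, column $j$ of $(t^\circ)^\circ$ equals column $j$ of $t$. Hence $(t^\circ)^\circ = t$, so the two maps are mutually inverse and the construction is a bijection.
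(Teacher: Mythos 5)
Your proof is correct, and it reaches the crucial point --- semistandardness of $t^\circ$ --- by a genuinely different route than the paper. The paper argues by contradiction: it supposes two adjacent columns of $t^\circ$ exhibit a row violation $\ell_i^\circ > k_i^\circ$, and then counts the integers strictly below $\ell_i^\circ$ in two ways, exploiting the fact that a column of $t^\circ$ and the corresponding column of $t$ partition $\{1,\ldots,r\}$: the count is exactly $(i-1)+h$ on one hand, yet at least $i+h$ on the other, a contradiction. Your argument is direct: the prefix-count lemma $|A_{p+1}\cap\{1,\ldots,v\}| \le |A_p\cap\{1,\ldots,v\}|$ (a consequence of the entrywise comparison of adjacent columns of $t$) is reversed by complementation, and the order-statistic characterization $(\bar S)_i \le v \iff |\bar S \cap \{1,\ldots,v\}|\ge i$ converts this into exactly the required inequality $(\bar A_{p+1})_i \le (\bar A_p)_i$. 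Both proofs rest on the same underlying counting principle, but yours isolates it as a reusable monotonicity lemma and avoids the index bookkeeping of the contradiction argument. You also spell out two points the paper compresses into ``it suffices to prove that $t^\circ$ is semistandard'': that the column lengths of the complement agree with those of $[\lambda^\circ]$ (so the filling genuinely has shape $\lambda^\circ$), and that bijectivity follows because the construction is an involution once $(\lambda^\circ)^\circ = \lambda$ is verified. What the paper's approach buys is brevity; what yours buys is a self-contained argument in which every step is routine to check.
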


\begin{proof}
It suffices to prove that $t^\circ$ is semistandard.
Suppose, for a contradiction, that columns
$c-j-1$ and $c-j$ of 
$t^\circ$ have entries $\ell_1^\circ \le  k_1^\circ$, \ldots, $\ell_{i-1}^\circ \le k_{i-1}^\circ$ and 
$\ell_i^\circ > k_i^\circ$ read from north to south. 
Let columns $j$ and $j+1$ of $t$ read
from north to south have entries $k_1 \le \ell_1$, \ldots, $k_h \le \ell_h$ where
$h$ is maximal such that $\ell_h < \ell_i^\circ$.
Then $\{\ell_1^\circ, \ldots, \ell_{i-1}^\circ, \ell_1, \ldots, \ell_h\}$ are all the numbers strictly less than $\ell_i^\circ$ in $\{1,\ldots, r\}$, since, by choice of $h$, if $\ell_{h+1}$ is defined then $\ell_{h+1} > \ell_i^\circ$.
But from the chain $\ell_i^\circ > k_i^\circ > \ldots > k_1^\circ$ and the inequalities $\ell_i^\circ > \ell_h \ge \ell_j \ge k_j$
for $j \in \{1,\ldots, h\}$, we see that $\ell_i$ is strictly greater than $i+h$ distinct numbers, a contradiction.
\end{proof}

\subsection{Proof of Theorem~\ref{thm:main}}
Observe that if $t \in \SSYT_r(\lambda)$ then 
$|t| + |t^\circ| = r(1 + \cdots + c) = rc(c+1)/2$. Therefore
by~\eqref{eq:qSpec}, the bijection in Proposition~\ref{prop:KingBijection},
and then Lemma~\ref{lemma:centralSym}, we have
\begin{align*}
s_{\lambda^\circ}(q,\ldots, q^r) &= \sum_{u \in \SSYT_r(\lambda^\circ)} q^{|u|} \\
&= \sum_{t \in \SSYT_r(\lambda)} q^{(r+1)rc/2 - |t|}  \\
&= q^{r(r+1)c/2} s_\lambda(q^{-1},\ldots, q^{-r}) \\
&= q^{(r+1)(rc/2 -n)} s_\lambda(q,\ldots, q^r).
\end{align*}
Applying Stanley's Hook Content Formula~\eqref{eq:HCF} to each side we obtain
\[ q^{B(\lambda^\circ)}\!\! \prod_{(i,j) \in [\lambda^\circ]} \frac{q^{r + j - i} - 1}{q^{h_{(i,j)}(\lambda^\circ)} - 1} =
q^{(r+1)(rc/2 -n) + B(\lambda)}\!\!
\prod_{(i,j) \in [\lambda]} \frac{q^{r + j - i} - 1}{q^{h_{(i,j)}(\lambda)} - 1} .
  \]  
We now relate the numerators to the distances in Theorem~\ref{thm:main}, using
the bijection from  $[\lambda^\circ]$ to $D \backslash [\lambda]$
defined by $(i,j) \mapsto (i',j')$ where $i' = r+1-i$ and $j'=c+1-j$. 
We have
$h_{(i',j')}(\lambda) = h_{(i,j)}(\lambda^\circ)$. Moreover,
$d_{(i',j')}(\lambda)$ is the number of boxes in any walk by steps north and east 
from $(r+1-i,c+1-j)$ to $(1,c)$, namely $r-i + j$. Therefore the left-hand side
is 
\[ q^{B(\lambda^\circ)} \prod_{(i',j') \in D \backslash [\lambda]} 
\frac{q^{d_{(i',j')}(\lambda)}-1}{q^{h_{(i',j')}(\lambda)} - 1}.\]
If $(i,j) \in [\lambda]$ then $d_{(i,j)}(\lambda)$ is the number of
boxes in any walk by steps south
and west to~$(r,1)$, again $r-i + j$. Therefore,
cancelling the powers of $q$ we obtain
\[ \prod_{(i',j') \in D\backslash [\lambda]} 
\frac{q^{d_{(i',j')}(\lambda)}-1}{q^{h_{(i',j')}(\lambda)} - 1} = 
\prod_{(i,j) \in [\lambda]} \frac{q^{d_{(i,j)}(\lambda)} - 1}{q^{h_{(i,j)}(\lambda)} - 1}. \]
Theorem~\ref{thm:main} now follows by multiplying through by the denominators
and applying Lemma~\ref{lemma:fullCyclotomic}.

\def\cprime{$'$} \def\Dbar{\leavevmode\lower.6ex\hbox to 0pt{\hskip-.23ex
  \accent"16\hss}D} \def\cprime{$'$}
\providecommand{\bysame}{\leavevmode\hbox to3em{\hrulefill}\thinspace}
\providecommand{\MR}{\relax\ifhmode\unskip\space\fi MR }
\providecommand{\MRhref}[2]{%
  \href{http://www.ams.org/mathscinet-getitem?mr=#1}{#2}
}
\providecommand{\href}[2]{#2}

\end{document}